\def\timestamp{%
Time-stamp: <unions_of_F-spaces.tex: Thursday 13-06-2013 at 09:41:02 (cest)>}
\def\stripname Time-stamp: <#1 #2>{#2}
\edef\filedate{\expandafter\stripname\timestamp}
\DeclareMathSymbol\restr\mathbin{AMSa}{"16}  %\let\restriction\restr
\newtheorem{theorem}{Theorem}[section]
\newtheorem{lemma}[theorem]{Lemma}
\newtheorem{proposition}[theorem]{Proposition}
\theoremstyle{definition}
\newtheorem{definition}[theorem]{Definition}
\theoremstyle{remark}
\newtheorem{claim}{Claim}[theorem]
\newcommand\cf{\operatorname{cf}}
\newcommand\cl{\operatorname{cl}}
\newcommand\clbeta[1]{\cl_{\beta#1}}
\newcommand\orpr[2]{\langle{#1},{#2}\rangle}
\newcommand\cont{\mathfrak{c}}
\let\con\subseteq
\newcommand\interval{\mathbb{I}}
\newcommand\naturals{\mathbb{N}}
\newcommand\CH{\ensuremath{\mathsf{CH}}}
\newcommand\betaN{\beta\omega}
\newcommand\Nstar{\omega^*}
\newcommand\pistar{\pi^*}
\newcommand\preim{^\gets}
\begin{document}

\title{Unions of $F$-spaces}

\author[K. P. Hart]{Klaas Pieter Hart}
\address[K. P. Hart]%
        {Faculty of Electrical Engineering, Mathematics and Computer Science\\
         TU~Delft\\ Postbus 5031\\ 2600~GA Delft\\ The Netherlands}
\email{k.p.hart@tudelft.nl}

\author[L. Luo]{Leon Luo}
\address[L. Luo]%
        {Faculty of Electrical Engineering, Mathematics and Computer Science\\
         TU~Delft\\ Postbus 5031\\ 2600~GA Delft\\ The Netherlands}
\email{l.luo@tudelft.nl}

\thanks{The second-named author would like to thank Mr.~Yi~Tang for 
        financial support.}

\author[J. van Mill]{Jan van Mill}
\address[J. van Mill]{Faculty of Sciences,
                      VU University Amsterdam, 
                      De Boelelaan 1081A, 
                      1081~HV Amsterdam, The Netherlands}

\address[J. van Mill]%
        {Faculty of Electrical Engineering, Mathematics and Computer Science\\
         TU~Delft\\ Postbus 5031\\ 2600~GA Delft\\ The Netherlands}

\address[J. van Mill]%
       {Department of Mathematical Sciences, 
        University of South Africa,
        P.~O. Box 392,
       0003 Unisa, South Africa}
\email{j.van.mill@vu.nl}

\date{\filedate}

\keywords{$F$-space, 
          $P$-space, 
          Compact zero-dimensional space, 
          $C^{*}$-embedded, 
          $\omega_1$, $\omega_2$, $\betaN$, $\Nstar$.}

\subjclass{54G05, 54G10}

\begin{abstract}
We show that every space that is the union of a `small' family 
$\mathcal{F}$ consisting of special $P$-sets, is an $F$-space. 
We also comment on the sharpness of our results.
\end{abstract}

\maketitle

\section*{Introduction}

We assume that every space is Tychonoff unless specified, 
and $\beta X$ and $X^{*}$ stand for the \v{C}ech-Stone compactification and 
the \v{C}ech-Stone remainder of $X$ respectively. 
A space is an $F$-space if disjoint cozero-subsets are contained in disjoint 
zero-subsets. 
Equivalently, $X$ is an $F$-space if every cozero-subset of $X$ 
is $C^*$-embedded in $X$. 
The study of $F$-spaces has a long history since late 1950's~\cite{GJ}. 
For basic information on $F$-spaces, 
see \cite{GJ}, \cite{vanmill:betaomega} and~\cite{walker:cechstone}.

It is proven in~\cite{fg:extension} that each union of $\omega_1$~many 
cozero-subsets of an $F$-space is again an $F$-space. 
Hence under the Continuum Hypothesis (abbreviated \CH) each open subspace of 
an $F$-space of weight~$\cont$ is again an $F$-space. 
In~\cite{Dow83} an example was constructed of a compact $F$-space with 
weight $\omega_2\cdot\cont$ that has an open subspace that is not an $F$-space.
Hence \CH~is equivalent to the statement that each open subspace of an 
$F$-space with weight~$\cont$ is again an $F$-space.
See~\cite{Dow83}, \cite{Dow83a} and~\cite{DowForster82} for more related 
results.

These results have motivated us to study the question 
``when is the union of $F$-subspaces again an $F$-space'' 
more closely. 
In this note, it is shown that if a space can be covered by a family
 of $\omega_1$~many special $P$-sets, then it is an $F$-space. 
We shall also use the examples in~\cite{Dow83,Dow83a} to discuss the 
sharpness of our result.

\section{Preliminaries}

A closed subset $A$ of a space $X$ is called a $P$-set if the intersection of 
any countable family of neighborhoods of $A$ is again a neighborhood of~$A$. 
If $A$~is a singleton subset of~$X$, then the point in it
is usually referred as a $P$-point.

\begin{definition}
A space is a $P$-space if every point is a $P$-point.
\end{definition}

\begin{definition}
A closed subset $A$ of a space $X$ is called \emph{nicely placed} 
in~$X$ if for every open neighborhood~$U$ of~$A$ there is a 
cozero-subset~$V$ of~$X$ such that $A\con V \con U$.
\end{definition}

\begin{definition}
A subset $A$ of a space $X$ is said to be $C^{*}$-embedded in~$X$ if for 
each continuous function $f: A\to\interval$, there is a continuous 
extension $\bar{f}: X\to\interval$ of~$f$.
\end{definition}

\begin{proposition}[{\cite[1.61]{walker:cechstone}}]
A $C^{*}$-embedded subspace of an $F$-space is an $F$-space.
\end{proposition}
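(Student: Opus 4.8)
The plan is to check the defining property of an $F$-space directly. Let $Y$ be a $C^{*}$-embedded subspace of the $F$-space~$X$, and let $C_{1},C_{2}$ be disjoint cozero-subsets of~$Y$; I must find disjoint zero-subsets of~$Y$ containing them. Write $C_{i}=\operatorname{coz}(f_{i})$ with $f_{i}\in C(Y)$. Replacing $f_{i}$ by $\min\{|f_{i}|,1\}$ I may assume $0\le f_{i}\le 1$, and since $C_{1}\cap C_{2}=\emptyset$ the product $f_{1}f_{2}$ vanishes identically on~$Y$. Put $f=f_{1}-f_{2}\in C^{*}(Y)$; as $f_{1}$ and $f_{2}$ have disjoint supports, a pointwise check gives $f^{+}=f_{1}$ and $f^{-}=f_{2}$.

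Now I would use the two hypotheses in turn. Since $Y$ is $C^{*}$-embedded in~$X$, the bounded function $f$ has an extension $\bar f\in C^{*}(X)$. The functions $\bar f^{+}$ and $\bar f^{-}$ are nowhere simultaneously nonzero, so their cozero-sets are disjoint; as $X$ is an $F$-space there are $g_{1},g_{2}\in C(X)$ with $\operatorname{coz}(\bar f^{+})\con Z(g_{1})$, $\operatorname{coz}(\bar f^{-})\con Z(g_{2})$, and $Z(g_{1})\cap Z(g_{2})=\emptyset$. Restricting to~$Y$ and using $\bar f\restr Y=f$---so that $\bar f^{+}\restr Y=f^{+}=f_{1}$ and likewise $\bar f^{-}\restr Y=f_{2}$---we obtain
\[ C_{1}=\operatorname{coz}(f_{1})=\operatorname{coz}(\bar f^{+})\cap Y\con Z(g_{1})\cap Y \quad\text{and}\quad C_{2}\con Z(g_{2})\cap Y, \]
while $\bigl(Z(g_{1})\cap Y\bigr)\cap\bigl(Z(g_{2})\cap Y\bigr)=\emptyset$. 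Since $Z(g_{i})\cap Y=Z(g_{i}\restr Y)$ is a zero-subset of~$Y$, this completes the argument.

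I do not expect a genuine obstacle: the proof draws only on the two hypotheses and the definition of $F$-space, and the sole points requiring a line of care are the reduction to functions with disjoint supports and the identity $\bar f^{+}\restr Y=f_{1}$, both routine. A less elementary alternative would be to observe that $C^{*}$-embeddedness yields $\beta Y=\clbeta X Y$, a closed subspace of the compact space~$\beta X$, and then quote the facts that $\beta X$ inherits being an $F$-space from~$X$ and that closed subspaces of compact $F$-spaces are $F$-spaces; since that bundles in two auxiliary results, the direct verification above seems preferable.
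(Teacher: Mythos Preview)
Your proof is correct. The paper, however, does not give its own proof of this proposition: it is merely stated with a citation to Walker~\cite[1.61]{walker:cechstone} and then used as a black box, so there is nothing in the paper to compare your argument against.

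For what it is worth, your direct verification is the standard one: encode the two disjoint cozero-sets in a single bounded function $f=f_{1}-f_{2}$, extend it using $C^{*}$-embeddedness, separate $\operatorname{coz}(\bar f^{+})$ and $\operatorname{coz}(\bar f^{-})$ in~$X$ using the $F$-space hypothesis, and restrict back. All the small checks you flag (that $f^{+}=f_{1}$, that $(\bar f)^{+}\restr Y=(\bar f\restr Y)^{+}$) are indeed routine. The alternative route via $\beta Y=\cl_{\beta X}Y$ that you mention is also a valid and well-known approach.
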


If $X$ is a set, and $\kappa$ is a cardinal number, 
then $[X]^\kappa$ denotes $\{A\con X : |A|=\kappa\}$.

\section{Unions of $F$-spaces}

In this section we present our main result on unions of $F$-subspaces. 
In the next sections we will comment on its sharpness.

\begin{theorem}\label{eerstestelling}
Let $X$ be a space with a cover $\mathcal{F}$ that consists of not more
than $\omega_1$ many $P$-subsets, each of which is a nicely placed 
$C^*$-embedded $F$-subspace of~$X$.
Then $X$~is an $F$-space.
\end{theorem}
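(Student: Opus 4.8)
The plan is to verify the $F$-space property directly in terms of disjoint cozero-sets. So let $A$ and $B$ be disjoint cozero-subsets of $X$; I must produce disjoint zero-sets $Z_A\con A$-side and $Z_B\con B$-side separating them, equivalently a continuous $f:X\to\interval$ with $f\restriction A\equiv 0$ and $f\restriction B\equiv 1$ — actually it suffices to separate $A$ and $B$ by a continuous real-valued function, since $A,B$ are already cozero. Write $A=\operatorname{coz}(g)$ and $B=\operatorname{coz}(h)$ for continuous $g,h:X\to\interval$ with $gh\equiv 0$. Enumerate the cover as $\mathcal{F}=\{F_\alpha:\alpha<\omega_1\}$. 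The idea is a transfinite construction of length $\omega_1$: at stage $\alpha$ we will have a continuous function defined on a suitable closed set containing $F_\alpha$, and we patch in the behaviour forced on $F_\alpha$ using that $F_\alpha$ is an $F$-space (hence $A\cap F_\alpha$, $B\cap F_\alpha$ can be separated inside $F_\alpha$), that $F_\alpha$ is $C^*$-embedded (so any such function extends off $F_\alpha$), that $F_\alpha$ is nicely placed (so the extension can be taken to agree with what was built before on a cozero-neighbourhood), and that $F_\alpha$ is a $P$-set (so at limit stages the countably-indexed pieces glue together continuously).

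More concretely, I would build an increasing chain of closed sets $C_\alpha=\CL{\bigcup_{\beta\le\alpha}F_\beta}$ together with continuous $f_\alpha:C_\alpha\to\interval$ such that $f_\alpha\restriction A\cap C_\alpha\equiv 0$, $f_\alpha\restriction B\cap C_\alpha\equiv 1$, and $f_\alpha$ extends $f_\beta$ for $\beta<\alpha$. At a successor stage $\alpha$: inside the $F$-space $F_\alpha$ the disjoint cozero-sets $A\cap F_\alpha$ and $B\cap F_\alpha$ are completely separated, and combining with the already-defined $f_{\alpha-1}$ on $C_{\alpha-1}\cap F_\alpha$ (which is a zero-set, in fact a $P$-subset, of $F_\alpha$, and on which the constraints are consistent), we get a continuous function on $F_\alpha$; then $C^*$-embeddedness of $F_\alpha$ in $X$ extends it to $X$, nice placement lets us arrange the extension to coincide with $f_{\alpha-1}$ on a whole cozero-neighbourhood $V$ of $C_{\alpha-1}$ in $X$, and since $C_{\alpha-1}\con V$ is open in $C_\alpha$, the two pieces glue to a continuous $f_\alpha$ on $C_\alpha$. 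At a limit stage $\alpha$ of cofinality $\omega$: pick $\beta_n\uparrow\alpha$; every point of $C_\alpha$ lies in some $F_\gamma$ with $\gamma<\alpha$, hence $\gamma\le\beta_n$ for some $n$; the point is that $F_\gamma$ is a $P$-set, so a neighbourhood of it meets $C_\alpha$ only inside $\CL{\bigcup_{\delta\le\beta_n}F_\delta}=C_{\beta_n}$ up to the already-controlled part — more carefully, since each $C_{\beta_n}$ is a $P$-set (a finite, hence countable, union of $P$-sets that are closed is a $P$-set) one shows the union map $f_\alpha=\bigcup_n f_{\beta_n}$ is continuous at every point of $C_\alpha$. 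Finally $f=f_{\omega_1}$ separates $A$ and $B$ on $X=C_{\omega_1}$.

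The main obstacle I expect is the limit step, specifically checking continuity of the glued function at a point $x\in C_\alpha\setminus\bigcup_n C_{\beta_n}$ when $\cf(\alpha)=\omega$. Such $x$ lies in $\CL{\bigcup_{\beta<\alpha}F_\beta}$ but perhaps in no single $F_\beta$; here I need the $P$-set hypothesis to work: the countable intersection of the cozero-neighbourhoods of the $C_{\beta_n}$ on which $f_{\beta_{n+1}}$ was forced to agree with $f_{\beta_n}$ should be a neighbourhood of the appropriate $P$-set, forcing the $f_{\beta_n}$'s to stabilize near $x$ — this is exactly where ``nicely placed'' (giving cozero-neighbourhoods) and ``$P$-set'' (making countable intersections of neighbourhoods neighbourhoods) combine. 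A secondary subtlety is verifying that at successor stages the partial function on $C_{\alpha-1}\cap F_\alpha$ together with the separation data inside $F_\alpha$ really does assemble to a continuous function on $F_\alpha$ — this should again follow because $C_{\alpha-1}\cap F_\alpha$ is a nicely placed $P$-subset of $F_\alpha$, so one can carry out a one-step version of the same gluing inside the $F$-space $F_\alpha$. I would isolate the needed ``glue two functions across a nicely placed $P$-set'' lemma once and invoke it at every stage.
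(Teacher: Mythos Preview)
Your overall strategy---transfinite recursion of length $\omega_1$, at each stage using the $F$-space, $C^*$-embedded, nicely placed and $P$-set properties of $F_\alpha$---is the same as the paper's. But your choice of \emph{domains} for the partial functions creates real problems that you have not resolved. You build $f_\alpha$ on the closed sets $C_\alpha=\CL{\bigcup_{\beta\le\alpha}F_\beta}$, and this is where things go wrong.

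First, the parenthetical ``a finite, hence countable, union of $P$-sets that are closed is a $P$-set'' is not a theorem: finite unions of $P$-sets are $P$-sets, but closures of countable unions need not be, and you give no argument. This undercuts your limit step entirely. Second, your successor step invokes ``nice placement'' to produce a cozero neighbourhood of $C_{\alpha-1}$, but only $F_\alpha$ is assumed nicely placed, not $C_{\alpha-1}$; the hypothesis is applied to the wrong set. Third, even granting a function on each $F_\alpha$ agreeing with $f_{\alpha-1}$ on $C_{\alpha-1}\cap F_\alpha$, you have not shown how to obtain it: you need a continuous map on $F_\alpha$ that simultaneously separates $A\cap F_\alpha$ from $B\cap F_\alpha$ \emph{and} extends $f_{\alpha-1}\restr(C_{\alpha-1}\cap F_\alpha)$, and ``$C_{\alpha-1}\cap F_\alpha$ is a nicely placed $P$-subset of $F_\alpha$'' is asserted, not proved. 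Finally, the continuity of $\bigcup_\alpha f_\alpha$ at stage $\omega_1$ is not addressed; since the $C_\alpha$ are closed, not open, this is not automatic.

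The paper sidesteps all of this with one change: it carries the recursion on \emph{cozero} sets $V_\alpha\supseteq F_\alpha$ rather than on closed sets. At every stage (limit or not) $V=\bigcup_{\beta<\alpha}V_\beta$ is again a cozero-set, and $g=\bigcup_{\beta<\alpha}f_\beta$ is automatically continuous because the $V_\beta$ are open. Then $V\cap F_\alpha$ is a cozero-set of the $F$-space $F_\alpha$, hence $C^*$-embedded, so $g\restr(V\cap F_\alpha)$ extends over $F_\alpha$ and then, by $C^*$-embedding of $F_\alpha$, over $X$ to some $\eta$. The set where $g$ and $\eta$ differ by at least $2^{-k}$ on a closed piece of $V$ is closed and misses $F_\alpha$; now the $P$-set and nicely-placed hypotheses---applied only to the single set $F_\alpha$---give a cozero $W\supseteq F_\alpha$ on which $g$ and $\eta$ agree, and one takes $V_\alpha=V\cup W$. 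No separate limit argument, no need for unions of $P$-sets to be $P$-sets. The paper also uses the equivalent ``every cozero-set is $C^*$-embedded'' formulation of $F$-space, which removes the need to juggle two separation constraints against the extension constraint; this is worth adopting.
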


\begin{proof}
Let $U$ be a cozero-subset of $X$, and let $f: U\to \interval$ be continuous. 
Enumerate $\mathcal{F}\cup \{\emptyset\}$ as 
$\{F_\alpha : \alpha < \omega_1\}$ where $F_0=\emptyset$. 
We shall construct, by transfinite recursion,
for each $\alpha <\omega_1$, a cozero subset~$V_\alpha$ 
of~$X$ and a continuous function $f_\alpha : V_\alpha \to \interval$
such that
\begin{enumerate}
\item $V_0 = U$ and $f_0 = f$;
\item $F_\alpha \con V_\alpha$;
\item if $\beta < \alpha$ then $V_\beta \con V_\alpha$ and 
      $f_\alpha \restr  V_\beta = f_\beta$.
\end{enumerate}
Suppose that we have constructed $V_\beta$ and $f_\beta$ for all 
$\beta < \alpha$ where $\alpha < \omega_1$. 
Put $V= \bigcup_{\beta < \alpha} V_\beta$ and 
$g = \bigcup_{\beta < \alpha} f_\beta$. 
Clearly, $V$~is a cozero-subset of~$X$ and $g$~is continuous on~$V$. 
Let $h = g\restr F_\alpha$. 
Since $V\cap F_\alpha$ is a cozero-subset of~$F_\alpha$ and 
$F_\alpha$~is an $F$-space, we can extend~$h$ to a continuous 
function $\xi: F_\alpha\to\interval$. 
Moreover, since $F_\alpha$~is $C^*$-embedded in~$X$, we can extend~$\xi$ 
to a continuous function $\eta: X\to \interval$.

We claim that there is a cozero-subset~$W$ of~$X$ such that 
$F_\alpha\con W$ and 
$$
g\restr (W\cap V) = \eta\restr (W\cap V)
$$ 
Indeed, we write $V$ as $\bigcup_{n<\omega} A_n$, 
where each~$A_n$ is closed in~$X$. 
For all $n<\omega$ and $k \ge 1$, let
$$
    A^k_n = \{x\in A_n : |g(x) - \eta(x)| \ge 2^{-k}\}.
$$
Clearly, $A^k_n$ is closed in $X$ and disjoint from~$F_\alpha$
since $g\restr  (F_\alpha \cap V) = \eta\restr  (F_\alpha \cap V)$. 
As $F_\alpha$~is a nicely placed $P$-subset of~$X$ there is a 
cozero subset~$W$ of~$X$ such that
$$
F_\alpha \con W \con X\setminus \bigcup_{n<\omega} \bigcup_{k\ge 1} A^k_n.
$$
It is clear that $W$ is as required.
Now put $V_\alpha = V\cup W$ and $f_\alpha = g \cup (\eta\restr W)$. 

At the end of the recursion $\bar f = \bigcup_{\alpha < \omega_1} f_\alpha$;
this is the desired continuous extension of~$f$.
\end{proof}

\section{The first example}

We shall describe an example of a locally compact space that is not an 
$F$-space yet it admits a clopen cover of size~$\omega_2$ 
consisting of compact zero-dimensional $F$-spaces. 
This shows that Theorem~\ref{eerstestelling} is false for unions of 
families of size~$\omega_2$.
Our example is a modification of the example in ~\cite{Dow83}.

\smallskip
Our starting point is the compact space~$G$ obtained from the topological sum
of $\Nstar\times(\omega_1+1)$ and $\betaN$ by identifying
the points $\orpr u{\omega_1}$ and $u$, for every point~$u$ of~$\Nstar$.

Observe that after this identification $\omega$ is an open $F_\sigma$-subset 
of~$G$ and that $\betaN$~is a $P$-set of character~$\omega_1$ in~$G$. 
Moreover, the weight of~$G$ is equal to~$\cont$ and $G$~is zero-dimensional.

Our next step is to put $Y = \omega\times G$. 
Let $\pi: Y\to G$ denote the projection map and let $\pistar : Y^*\to G$
be the restriction of the Stone extension of~$\pi$.
As $\pistar$~is closed the preimage $(\pistar)\preim[\betaN]$ is not open 
since $\betaN$~is not open in~$G$. 

The space $Y^*$~is a compact zero-dimensional $F$-space of weight~$\cont$ 
and $(\pistar)\preim[\betaN]$~is a $P$-set of character~$\omega_1$. 
The problem is that $(\pistar)\preim[\omega]$~is not dense 
in~$(\pistar)\preim[\betaN]$. 
To remedy this let $f$ be the restriction
 of~$\pistar$ to~$(\pistar)\preim[\betaN]$. 
Now $f$ maps the closed $P$-set $(\pistar)\preim[\betaN]$ onto 
the compact $F$-space~$\betaN$.
Hence~\cite[Lemma~1.4.1]{vanmill:betaomega} applies to show that the 
adjunction space
$
    \Omega=Y^* \cup_f \betaN
$
is a compact $F$-space of weight~$\cont$. 
It is also easily seen to be zero-dimensional.
Thus we have replaced $(\pistar)\preim[\betaN]$ in~$Y^*$ by 
(a copy of)~$\betaN$;
in this way we get an open $F_\sigma$-subset~$C$ in~$\Omega$ whose
closure is a $P$-set of character~$\omega_1$: let $C=\omega$.
 
We can give an explicit increasing sequence 
$\langle V_\alpha:\alpha\in\omega_1\rangle$
of clopen sets in~$\Omega$ such that $\omega\setminus\cl_\Omega C$
is equal to $\bigcup_{\alpha\in\omega_1}V_\alpha$.
Indeed, in $G$ we have the clopen initial segments 
of $\Nstar\times(\omega_1+1)$: put $G_\alpha=\Nstar\times(\alpha+1)$
for each~$\alpha$.  
These are transported into $Y^*$, and hence into~$\Omega$, by taking
preimages: let $V_\alpha=(\pistar)\preim[G_\alpha]$ for all~$\alpha$.

Now we perform the same construction as in~\cite{Dow83} with $\Nstar$ 
replaced by~$\Omega$. 
Let $X$ be $\omega_2+1$ endowed with $G_\delta$-topology.
We observe that $X\times \Omega$ is an $F$-space by~\cite{Negrepontis69}, 
and that its weight is equal to $\omega_2\cdot\cont$.
This implies that $K = \beta (X\times \Omega)$ is an $F$-space as well
and its weight is equal to~$(\omega_2\cdot\cont)^\omega=\omega_2\cdot\cont$.

Next let $L= \{\alpha\in\omega_2+1 : \cf\alpha\ge\omega_1\}$.
We let $T$ be the closure in~$K$ of $L\times C$; note that
$T=\cl_K(L\times\cl_\Omega C)$ also.
The complement~$U$ of $T$ in $K$ is our example.

That $U$ is not an $F$-space is proven in exactly the same way 
as in~\cite{Dow83}.

To finish we show that $U$ is the union of $\omega_2$ many clopen subset of~$K$.
Each of these is trivially a nicely placed and $C$-embedded $P$-set, and
an $F$-space because $K$~is.

The first $\omega_1$ many clopen sets are the closures 
$\cl_K(X\times V_\alpha)$, for $\alpha\in\omega_1$; these cover the points
of~$U$ that do not belong to $\cl_K(X\times\cl_\Omega C)$, 
as we shall see presently.

The other $\omega_2$ many clopen sets will appear in the course of the 
following argument.
Let $u\in U$ and let $W$ be a clopen neighbourhood of~$u$ in~$K$ that is 
disjoint from~$T$.
We let $A =\bigl\{\alpha\in X\setminus L : 
 (\exists m\in\cl_\Omega C)(\orpr{\alpha}{m}\in W)\bigr\}$;
note that, because $W$~is clopen, it is even the case that 
$W\cap(\{\alpha\}\times C)\neq\emptyset$ whenever $\alpha\in A$.

\begin{claim}
$A$ is countable.
\end{claim}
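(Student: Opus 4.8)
The plan is to argue by contradiction: assume $A$ is uncountable and manufacture a point of $W\cap T$, which is impossible because $W$ was chosen disjoint from~$T$. First I would fix a set $A'\con A$ with $|A'|=\omega_1$. As noted just above, for every $\alpha\in A$ the clopen set $W$ meets $\{\alpha\}\times C$, so I can pick $n_\alpha\in C$ with $\orpr{\alpha}{n_\alpha}\in W$. Since $C=\omega$ is countable and $\omega_1$ is regular, a pigeon-hole argument supplies one $n\in C$ and a set $A''\con A'$ with $|A''|=\omega_1$ and $\orpr{\alpha}{n}\in W$ for all $\alpha\in A''$; note that $A''\con\omega_2$, since $\omega_2\in L$ whereas $A''\cap L=\emptyset$, so all members of $A''$ are ordinals below~$\omega_2$.

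The second step is to produce a point of~$L$ in the closure of~$A''$ in~$X$. Enumerate $A''$ in increasing order as $\langle\alpha_\xi:\xi<\omega_1\rangle$ and set $\gamma=\sup_{\xi<\omega_1}\alpha_\xi$. Regularity of~$\omega_2$ gives $\gamma<\omega_2$, and regularity of~$\omega_1$ gives $\cf\gamma=\omega_1$, so $\gamma\in L$. Because $\cf\gamma>\omega$, every $G_\delta$-neighbourhood of~$\gamma$ in~$X$ contains a tail $(\delta,\gamma]$ with $\delta<\gamma$; each such tail meets $A''$ because $\sup_{\xi<\omega_1}\alpha_\xi=\gamma$, and hence $\gamma\in\cl_X A''$.

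Finally I would merge the two facts. Since $C=\omega$ is discrete and open in~$\Omega$, the singleton $\{n\}$ is open in~$\Omega$, so the sets $O\times\{n\}$, with $O$ a neighbourhood of~$\gamma$ in~$X$, form a neighbourhood base of $\orpr{\gamma}{n}$ in $X\times\Omega$ and show that $\orpr{\gamma}{n}$ lies in the closure of $\{\orpr{\alpha}{n}:\alpha\in A''\}$ computed in $X\times\Omega$, and therefore also in~$K$. That closure is contained in the closed set~$W$, so $\orpr{\gamma}{n}\in W$; but $\gamma\in L$ and $n\in C$ give $\orpr{\gamma}{n}\in L\times C\con T$, contradicting $W\cap T=\emptyset$. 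Hence $A$ must be countable. The step that needs care is this last one: passing to the large compactification~$K$ might a priori spoil convergence, and that is exactly why I first arrange a constant second coordinate~$n$ and exploit that $\{n\}$ is clopen in~$\Omega$, so that $\orpr{\gamma}{n}$ is already a limit point of the points $\orpr{\alpha}{n}$ inside the densely embedded product $X\times\Omega$, where the closure is just the ordinary product-topology closure.
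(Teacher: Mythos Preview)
Your argument is correct and essentially identical to the paper's: both use pigeonhole on the countable set $C$ to fix a second coordinate~$n$, take the supremum~$\gamma$ of an $\omega_1$-sized set of first coordinates to land in~$L$, and conclude $\orpr{\gamma}{n}\in W\cap(L\times C)$. The paper is terser (it simply asserts the final membership without your explicit closure argument) and first passes to an initial segment of~$A$ of order type exactly~$\omega_1$, which slightly cleans up the enumeration of~$A''$.
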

\begin{proof}
If $A$ is uncountable then, as a set or ordinals, it has an initial
segment of order type~$\omega_1$; we simply assume that the order type 
of $A$ itself is~$\omega_1$.
Let $\beta = \sup A$. 
Then $\beta \in L$. 
Moreover, for every $\alpha\in A$, pick, by the above remark,
an element $m_\alpha\in C$ such that $\orpr{\alpha}{m_\alpha}\in W$. 
Since $C$ is countable there is an $m\in C$ such that
$M=\{\alpha:m_\alpha=m\}$ has cardinality~$\omega_1$.
This then implies that
$\orpr{\beta}{m}\in W\cap (L\times C)$, a contradiction.
\end{proof}

\begin{claim}
There exists $\alpha < \omega_1$ such that
$$
    W\cap (X\times \Omega) \con (X\times V_\alpha) \cup (A\times \Omega).
$$
\end{claim}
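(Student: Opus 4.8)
The plan is to show that $W$, a clopen set disjoint from $T$, is eventually trapped inside $X\times V_\alpha$ except over the countable set~$A$. Recall $W\cap(X\times\Omega)$ consists of pairs $\orpr\gamma m$ with $\gamma\in X$ and $m\in\Omega$; by the definition of $A$, whenever $\gamma\notin L$ and $W$ meets $\{\gamma\}\times C$ we have $\gamma\in A$. The key observation is that for $\gamma\in L$ we must have $W\cap(\{\gamma\}\times\cl_\Omega C)=\emptyset$ entirely, since $T=\cl_K(L\times\cl_\Omega C)$ and $W\cap T=\emptyset$. So for every $\gamma\in L$, the fibre $W\cap(\{\gamma\}\times\Omega)$ misses $\cl_\Omega C$, and $\omega\setminus\cl_\Omega C=\bigcup_{\alpha\in\omega_1}V_\alpha$ is covered by the increasing clopen sequence $\langle V_\alpha\rangle$.

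**Reducing to a fibrewise/compactness argument.** First I would handle a single fibre over $\gamma\in X$. Since $W$ is clopen in the compact space $K$, $W\cap(\{\gamma\}\times\Omega)$ is clopen in $\{\gamma\}\times\Omega\cong\Omega$. If $\gamma\in L$ this clopen set is disjoint from $\cl_\Omega C$, hence contained in $\omega\setminus\cl_\Omega C=\bigcup_\alpha V_\alpha$; as $\cl_\Omega C$ is a $P$-set of character $\omega_1$ in $\Omega$ and the $V_\alpha$ are an increasing clopen cover of its complement, a compact clopen subset disjoint from $\cl_\Omega C$ is already contained in some single $V_{\alpha(\gamma)}$. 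If $\gamma\notin L$ but $\gamma\notin A$, then again $W$ misses $\{\gamma\}\times C$, and by the same reasoning (the fibre over $\gamma$ is clopen, disjoint from $C$, hence from $\cl_\Omega C$ by clopenness) it lies in some $V_{\alpha(\gamma)}$. So for every $\gamma\in X\setminus A$ there is $\alpha(\gamma)<\omega_1$ with $W\cap(\{\gamma\}\times\Omega)\con\{\gamma\}\times V_{\alpha(\gamma)}$.

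**From fibrewise to uniform.** The main obstacle is upgrading ``for each $\gamma$ there is $\alpha(\gamma)$'' to a single $\alpha$ working for all $\gamma\in X\setminus A$ simultaneously — i.e.\ showing $\sup_{\gamma\in X\setminus A}\alpha(\gamma)<\omega_1$. Here I would exploit the $G_\delta$-topology on $X=\omega_2+1$: in this topology every point of countable cofinality is isolated, so the only non-isolated points are those in $L$, and these form a closed discrete-in-a-strong-sense structure; more importantly, $X$ with the $G_\delta$-topology is an $\omega_1$-cub-complete (Lindel\"of-like) space whose compact subsets are small. The clean way is to argue by contradiction: if the $\alpha(\gamma)$ were cofinal in $\omega_1$, pick $\gamma_\xi$ ($\xi<\omega_1$) with $\alpha(\gamma_\xi)$ strictly increasing, and choose witnesses $m_\xi\in V_{\alpha(\gamma_\xi)+1}\setminus V_{\alpha(\gamma_\xi)}$ (or rather points of $\Omega$ in the fibre of $W$ that are not in $V_{\alpha(\gamma_\xi)}$). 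Taking a limit point of $\{\orpr{\gamma_\xi}{m_\xi}\}$ in the compact $K$, its $\Omega$-coordinate lands in $\bigcap_\xi(\Omega\setminus V_{\alpha(\gamma_\xi)})\subseteq\cl_\Omega C$ (since the complements of the $V_\alpha$ decrease to $\cl_\Omega C$ and $\cl_\Omega C$ is a $P$-set), while its $X$-coordinate lands in $L$ (a limit of $\omega_1$-many points of $X$ in the $G_\delta$-topology has cofinality $\ge\omega_1$). That puts the limit point in $\cl_K(L\times\cl_\Omega C)=T$, yet it is also in the closed set $W$, contradicting $W\cap T=\emptyset$. Hence $\alpha:=\sup_{\gamma\notin A}\alpha(\gamma)<\omega_1$, and since the $V_\beta$ increase, $W\cap(\{\gamma\}\times\Omega)\con\{\gamma\}\times V_\alpha$ for all $\gamma\notin A$. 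Combining this with the trivial inclusion $W\cap(A\times\Omega)\con A\times\Omega$ gives $W\cap(X\times\Omega)\con(X\times V_\alpha)\cup(A\times\Omega)$, as claimed.

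**Remark on the delicate point.** The step that needs the most care is the limit-point argument: one must make sure the chosen witnesses $m_\xi$ genuinely lie outside $V_{\alpha(\gamma_\xi)}$ in $\Omega$ (not merely in $K\setminus\cl_K(X\times V_{\alpha(\gamma_\xi)})$), and that ``limit of $\omega_1$ points of $X$ has cofinality $\ge\omega_1$'' is used with the $G_\delta$-topology, where a set of size $\omega_1$ cannot have its supremum be a point of countable cofinality without that supremum being a true limit — this is exactly where $L=\{\alpha:\cf\alpha\ge\omega_1\}$ enters. Everything else is routine clopen/compactness bookkeeping.
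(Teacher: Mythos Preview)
Your fibrewise step is fine, but the ``from fibrewise to uniform'' argument has a real gap.  You take a limit point~$p$ of $\{\orpr{\gamma_\xi}{m_\xi}\}$ in the \emph{compact} space~$K=\beta(X\times\Omega)$, and then speak of its ``$X$-coordinate'' and ``$\Omega$-coordinate''.  But $K$ is not $X\times\Omega$ (nor even $\beta X\times\Omega$: Glicksberg fails since $X\times\Omega$ is not pseudocompact), so a generic cluster point $p\in K$ need not lie in $X\times\Omega$ at all.  Via the Stone extensions you do get $\bar\pi_X(p)\in\beta X$ and $\bar\pi_\Omega(p)\in\Omega$, and your reasoning shows $\bar\pi_\Omega(p)\in\cl_\Omega C$; but there is no reason given that $\bar\pi_X(p)\in X$, let alone in~$L$ (if the $\gamma_\xi$ form a closed discrete set in~$X$, all cluster points project into $\beta X\setminus X$).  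And even if both projections landed in $L$ and $\cl_\Omega C$, that would not place $p$ in $T=\cl_K(L\times\cl_\Omega C)$, since $K\to\beta X\times\Omega$ is not injective.  So the contradiction with $W\cap T=\emptyset$ is not established.

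The paper avoids the passage to~$K$ entirely: it shows that each set $O_\alpha=\{\gamma\notin A:W\cap(\{\gamma\}\times\Omega)\subseteq\{\gamma\}\times V_\alpha\}$ is \emph{open} in~$X$ (because its complement is $A\cup\pi_X\bigl[W\cap(X\times(\Omega\setminus V_\alpha))\bigr]$, a closed set by compactness of $\Omega\setminus V_\alpha$), and then uses that $X\setminus A$ is Lindel\"of (via the pressing-down lemma) to extract a single~$\alpha$.  Your approach can in fact be repaired, but you must produce the cluster point \emph{inside} $X\times\Omega$: first show that any uncountable subset of~$X$ has a limit point $\delta\in L$ (this is itself a pressing-down argument), then use compactness of~$\Omega$ to find $m\in\cl_\Omega C$ such that $(\delta,m)$ is a cluster point of every tail of your sequence.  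That extra work is essential and is missing from your write-up.
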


\begin{proof}
To begin we observe that for every $\gamma\in X\setminus A$ there
is an~$\alpha$ such that
$$
W\cap(\{\gamma\}\times\Omega)\subseteq \{\gamma\}\times V_\alpha
$$
This follows because $W\cap(\{\gamma\}\times\Omega)$ is compact
and disjoint from $\{\gamma\}\times\cl_\Omega C$.

We claim that for each $\alpha$ the set
$O_{\alpha}=\{\gamma\notin A:
  (\{\gamma\}\times\Omega)\cap W\subseteq\{\gamma\}\times V_{\alpha}\}$ 
is open in~$X$.
Indeed, 
$X\setminus O_\alpha=A\cup\pi_X[W\cap(X\times(\Omega\setminus V_\alpha))]$, and 
this set closed because $A$ is closed and because the projection
$\pi_X:X\times(\Omega\setminus V_\alpha)\to X$ is closed (by compactness
of $\Omega\setminus V_\alpha$.

By repeated application of the pressing-down lemma one readily proves that
$X\setminus A$ is Lindel\"of, so that there is $\beta\in\omega_1$ such that
$X\setminus A\subseteq\bigcup_{\alpha<\beta}O_\alpha$.

But this then implies that 
$W\cap (X\times \Omega) \con (X\times V_\beta) \cup (A\times \Omega)$.
\end{proof}

Since $(X\times V_\beta) \cup (A\times \Omega)$ is clopen in $X\times\Omega$
we see that $W\subseteq \cl_K(X\times V_\beta)\cup\cl_K(A\times \Omega)$.

From this we extract our second family of clopen sets: all sets
of the form $\cl_K(A\times \Omega)$ for countable $A\subseteq X\setminus L$.

We finish by observing that $[X\setminus L]^\omega$ has a cofinal subfamily
$\mathcal{A}$ of cardinality~$\omega_2$:
for each $\alpha\in\omega_2$ the set $[\alpha\setminus L]^\omega$ has a cofinal 
subfamily~$\mathcal{A}_\alpha$ of cardinality~$\omega_1$, obtained via an 
injection from~$\alpha$ into~$\omega_1$.
Then $\mathcal{A}=\bigcup_{\alpha<\omega_2}\mathcal{A}_\alpha$ is as required.

Hence the clopen families $\{\cl_K(X\times V_\alpha):\alpha\in\omega_1\}$
and $\{\cl_K(A\times \Omega): A\in \mathcal{A}\}$ is the required cover of~$U$.

\section{The second example}

We shall describe an example of a space that admits a cover of size~$\omega_1$ 
consisting of $C^*$-embedded $F$-subspaces that are $P$-sets yet it is not
an $F$-space. 
This shows that Theorem~\ref{eerstestelling} is false for unions of $P$-sets 
that are not nicely placed.
The space is Example~1.9 from~\cite{Dow83a}.

Let $X = \omega_1\cup \{p\}$, 
where neighborhoods of~$p$ are cocountable and $\omega_1$~is discrete. 
Let $S = \omega_1\times \Nstar$, where again $\omega_1$~has the discrete 
topology. 
Let $C\con \Nstar$ be a cozero subset whose closure is not a zero-set. 
For $\alpha\in \omega_1$, let $C_\alpha = \{\alpha\}\times C$, and put
$$
K = \bigcap_{\alpha\in\omega_1} 
  \clbeta{S}\left(\bigcup_{\gamma>\alpha} C_\gamma\right).
$$
Then $Y=\beta S\setminus K$ is a locally compact $F$-space, 
and $X\times Y$ is not an $F$-space~\cite{Dow83a}.

The crucial property of $Y$ is the following: 
if for each $\alpha$ one takes a zero subset~$Z_\alpha$ of 
$\{\alpha\}\times \Nstar$ that contains $C_\alpha$ then
$$
Y\cap\bigcap_{\alpha\in\omega_1}
 \clbeta{S}\left(\bigcup_{\gamma>\alpha} Z_\gamma\right) \neq\emptyset. 
\leqno{(\dagger)}
$$

\begin{lemma}
Let $x$ be a $P$-point in a space~$D$ and 
let $E$ be a locally compact space. 
Then $\{x\}\times E$ is a $P$-set in $D\times E$.
\end{lemma}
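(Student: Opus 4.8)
The plan is to check the defining properties of a $P$-set for $\{x\}\times E$ directly. Closedness is immediate: since $D$ is Tychonoff the point $x$ is closed in $D$, so $\{x\}\times E$ is closed in $D\times E$. The substantive part is to show that whenever $\{U_n:n<\omega\}$ is a countable family of open subsets of $D\times E$ with $\{x\}\times E\con U_n$ for all $n$, the intersection $\bigcap_{n<\omega}U_n$ is again a neighbourhood of $\{x\}\times E$.

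The idea is to argue locally in the $E$-coordinate and then glue. Fix $e\in E$; by local compactness of $E$ pick an open neighbourhood $N_e$ of $e$ with $\overline{N_e}$ compact. For each $n$ the compact set $\{x\}\times\overline{N_e}$ lies inside the open set $U_n$, so the tube lemma yields an open neighbourhood $W_{e,n}$ of $x$ in $D$ with $W_{e,n}\times\overline{N_e}\con U_n$. Now the $P$-point hypothesis enters: $W_e=\bigcap_{n<\omega}W_{e,n}$ is a neighbourhood of $x$, and $W_e\times N_e\con W_{e,n}\times\overline{N_e}\con U_n$ for every $n$, whence $W_e\times N_e\con\bigcap_{n<\omega}U_n$.

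To finish I would set $O=\bigcup_{e\in E}(W_e\times N_e)$. This set is open, it contains $\{x\}\times E$ since $\orpr{x}{e}\in W_e\times N_e$ for each $e\in E$, and it is contained in $\bigcap_{n<\omega}U_n$ by the previous paragraph. Thus $\bigcap_{n<\omega}U_n$ is a neighbourhood of $\{x\}\times E$, and $\{x\}\times E$ is a $P$-set.

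There is no serious obstacle here; the one point that has to be handled with a little care is the double demand on the sets chosen in the $E$-direction — they must have \emph{compact} closure so that the tube lemma applies at each of the countably many stages, yet be \emph{open} so that the final union $O$ is genuinely a neighbourhood — which is exactly the content of local compactness. The only other ingredient is that the $P$-point property survives the passage from the tubes $W_{e,n}$ to their countable intersection $W_e$, which is where that hypothesis is used.
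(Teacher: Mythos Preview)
Your argument is correct and follows essentially the same line as the paper's: both fix a point of~$E$, use local compactness to get a compact neighbourhood there, invoke compactness (you via the tube lemma, the paper via closedness of the projection $\pi_D:D\times C\to D$) together with the $P$-point hypothesis to produce a neighbourhood of $\orpr{x}{e}$ with the right containment; the paper merely works with the dual formulation --- showing that an $F_\sigma$-set disjoint from $\{x\}\times E$ has disjoint closure --- but the content is the same. One small quibble: your $W_e=\bigcap_n W_{e,n}$ is only a \emph{neighbourhood} of~$x$, not necessarily open, so $O$ need not be open as stated; either replace $W_e$ by its interior, or simply conclude directly that $\bigcap_n U_n$ is a neighbourhood of each $\orpr{x}{e}$ and hence of $\{x\}\times E$.
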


\begin{proof}
Let $F$ be an $F_\sigma$-subset of $D\times E$ which is disjoint from 
$\{x\}\times E$, we show that $\cl F$ is also disjoint from $\{x\}\times E$.
 
To this end let $y\in E$ and let $C$ be a compact neighborhood of~$y$ in~$E$. 
The projection map $\pi_D: D\times C\to D$ is closed, hence 
$H=\pi_D[F \cap (D\times C)]$ is an $F_\sigma$-subset of~$D$ that does not 
contain~$x$. 
Hence $U = D\setminus\cl H$ is a neighborhood of~$x$ since $x$~is a $P$-point 
in~$D$. 
So the product of~$U$ and the interior of~$C$ is a neighborhood of 
$\orpr{x}{y}$ that is disjoint from~$F$, so that $\orpr{x}{y}\not\in\cl F$. 
\end{proof}

From this Lemma we conclude that the collection
$$
    \bigl\{\{x\}\times Y: x\in X\bigr\}
$$
consists of $P$-subsets of $X\times Y$ that are themselves $F$-spaces
and clearly $C^*$-embedded.
Since $X\times Y$ is not an $F$-space, at least one of them cannot be nicely 
placed by Theorem~\ref{eerstestelling}. 
Since $\{q\}\times Y$ is clopen in $X\times Y$ for every $q\in X\setminus \{p\}$
the only candidate for such $P$-set is $E=\{p\}\times Y$. 
It is instructive to provide a direct argument that $E$~is not nicely placed 
in~$X\times Y$.

To this end put
$$
 A = \bigcup_{\alpha\in \omega_1} \{\alpha\}\times C_\alpha.
$$
It was shown in the proof of Theorem~1.7 in~\cite{Dow83a} that $A$ is a 
cozero subsets of~$Y$. 
Since $A$~is disjoint from the $P$-set $E$ there is a neighbourhood~$O$ 
of~$E$ that is disjoint from~$A$.
If $E$ were nicely placed in $X\times Y$ then there would be a cozero-set~$V$
in~$X\times Y$ such that $E\con V \con O$. 
Hence $Z= (X\times Y)\setminus V$ is a zero-set in $X\times Y$ that 
contains~$A$ but misses~$E$. 
For every $\alpha < \omega_1$, put 
$Z_\alpha = Z\cap (\{\alpha\} \times \Nstar)$, this is a zero-set 
in $\{\alpha\} \times \Nstar$ that contains~$C_\alpha$. 
By ~$(\dagger)$ the intersection
$$
Y\cap\bigcap_{\alpha\in\omega_1}
 \clbeta{S}\left(\bigcup_{\gamma>\alpha} Z_\gamma\right) 
$$
is nonempty.
This intersection is a subset of~$Z\cap E$ which was assumed to be empty.

\section{The third example}

The only question left is whether the hypothesis of being $C^*$-embedded is 
essential for Theorem~\ref{eerstestelling}. 
Unfortunately, we are unable to answer this question. 
A simpler question is: 
Is it true that every $P$-subset which is nicely placed in an $F$-space 
is $C^*$-embedded in that space? 
If the answer is positive, the condition on $C^*$-embeddedness in 
Theorem~\ref{eerstestelling} would be superfluous. 
We can show that the assumption $2^{\omega_1} = \omega_2$ implies the answer 
is negative.
 
The equality $2^{\omega_1}=\omega_2$ implies that there is a maximal almost 
disjoint family on~$\omega_1$ of cardinality~$2^{\omega_1}$,
that is, a collection~$\mathcal{A}$ of subsets of~$\omega_1$ with the 
following properties:
\begin{enumerate}
\item $\mathcal{A}\con [\omega_1]^{\omega_1}$,
\item if $A,B\in \mathcal{A}$ are distinct, then $|A\cap B| \le \omega$,
\item $\mathcal{A}$ is maximal with respct to the properties (1) and (2),
\item $|\mathcal{A}| = 2^{\omega_1}$.
\end{enumerate}
Let $X$ be $\omega_1\cup \mathcal{A}$ and topologize $X$ in the standard way 
as follows: the points of $\omega_1$ are isolated and a neighborhood 
of~$A\in \mathcal{A}$ contains $\{A\}$ and all but countably many elements 
from~$A$. 
Then $X$ is a $P$-space, and by Jones' Lemma, 
the set~$\mathcal{A}$ is not $C^*$-embedded in $X$. 
However, by maximality of $\mathcal{A}$, every neighborhood of~$\mathcal{A}$ 
has a countable complement and is therefore clopen. 
So, every neighborhood of $\mathcal{A}$ is clopen, 
and therefore $\mathcal{A}$ is nicely placed in the $P$-space $X$ for 
trivial reasons.

%\bibliographystyle{\br{michael}}
%\bibliographystyle{alpha}
%\bibliography{\br{strings},\br{mill},\br{publ},\br{eric}}

\begin{thebibliography}{1}

\bibitem{Dow83}
A.~Dow, {\em C{H} and open subspaces of {$F$}-spaces}, Proc. Amer. Math. Soc.
  {\bf 89} (1983), 341--345.

\bibitem{Dow83a}
A.~Dow, {\em {On $F$-spaces and $F'$-spaces}}, Pac. J. Math. {\bf 108} (1983),
  275--284.

\bibitem{DowForster82}
A.~Dow and O.~Forster, {\em {Absolute $C^*$-embedding of $F$-spaces}}, Pac. J.
  Math. {\bf 98} (1982), 63--71.

\bibitem{fg:extension}
N.~J. Fine and L.~Gillman, {\em Extension of continuous functions in
  $\beta\naturals$}, Bull. Amer. Math. Soc. {\bf 66} (1960), 376--381.

\bibitem{GJ}
L.~Gillman and M.~Jerison, {\em {Rings of continuous functions}}, Van Nostrand,
  Princeton, 1960.

\bibitem{vanmill:betaomega}
J.~van Mill, {\em An introduction to $\beta\omega$}, Handbook of Set-Theoretic
  Topology (K.~Kunen and J.E. Vaughan, eds.), {North-Holland Publishing Co.},
  Amsterdam, 1984, pp.~503--567.

\bibitem{Negrepontis69}
S.~Negrepontis, {\em On the product of {$F$}-spaces}, Trans. Amer. Math. Soc.
  {\bf 136} (1969), 339--346.

\bibitem{walker:cechstone}
R.~C. Walker, {\em The {Stone-\v{C}ech} compactification}, Springer-Verlag,
  Berlin, 1974.

\end{thebibliography}

\def\cprime{$'$}
\makeatletter \renewcommand{\@biblabel}[1]{\hfill[#1]}\makeatother

\end{document}